\numberwithin{equation}{section}
\newcommand{\field}[1]{\mathbb{{#1}}}
\newcommand{\R}{\field{R}}
\newcommand{\C}{\field{C}}
\newcommand{\N}{\field{{N}}}
\newcommand{\intpart}[1]{\left\lfloor#1\right\rfloor}
\newcommand{\intpartup}[1]{\left\lceil#1\right\rceil}
\renewcommand{\d}{\,\mathrm{d}}
\newcommand{\Imm}{\mathrm{Im}}
\newcommand{\Ree}{\mathrm{Re}}
\newcommand{\Rp}{U}
\DeclareMathOperator{\sinc}{sinc}
\newtheorem{theorem}{Theorem}[section]
\newtheorem*{theorem*}{Theorem}
\newtheorem{lemma}[theorem]{Lemma}
\newtheorem*{lemma*}{Lemma}
\newtheorem{proposition}[theorem]{Proposition}
\newtheorem{corollary}[theorem]{Corollary}
\newtheorem*{corollary*}{Corollary}
\theoremstyle{remark}
\newtheorem*{remark*}{Remark}
\newtheorem*{acknowledgements}{Acknowledgements}
\begin{document}
\title{Primes in explicit short intervals on RH}

\author[A.~Dudek]{Adrian W. Dudek}
\address[A.~Dudek]{Mathematical Sciences Institute\\
         The Australian National University}
\email{adrian.dudek@anu.edu.au}

\author[L.~Greni\'{e}]{Lo\"{\i}c Greni\'{e}}
\address[L.~Greni\'{e}]{Dipartimento di Ingegneria gestionale, dell'informazione e della produzione\\
         Universit\`{a} di Bergamo\\
         viale Marconi 5\\
         24044 Dalmine (BG)
         Italy}
\email{loic.grenie@gmail.com}

\author[G.~Molteni]{Giuseppe Molteni}
\address[G.~Molteni]{Dipartimento di Matematica\\
         Universit\`{a} di Milano\\
         via Saldini 50\\
         20133 Milano\\
         Italy}
\email{giuseppe.molteni1@unimi.it}

\keywords{} \subjclass[2010]{Primary 11N05}
%11N05 =  Distribution of primes

%\date{\today. File name: {\tt \jobname.tex}}

\begin{abstract}
In this paper, on the assumption of the Riemann hypothesis, we give explicit upper bounds on the
difference between consecutive prime numbers.
\end{abstract}

\maketitle

\begin{center}
To appear in Int. J. Number Theory. 2015.
%DOI: 10.1142/S1793042116500858
\end{center}

\section{General setting and results}
\label{sec:A1}

The computation of the maximal prime gaps given by Oliveira e Silva, Herzog and
Pardi~\cite[Sec.~2.2]{Oliveira-SilvaHerzogPardi} verifies that $p_{k+1} - p_k < \log^2 p_k$ for all
primes $11\leq p_k \leq 4 \cdot 10^{18}$. This proves that $\forall x\in[5,4\cdot 10^{18}]$, there is a
prime in $[x-0.5\log^2 x,x+0.5\log^2 x]$.
It is the purpose of this article to furnish new explicit upper bounds on the difference between
consecutive prime numbers with the assumption of the Riemann hypothesis. Specifically, we prove the
following theorem.

\begin{theorem}\label{th:A1}
Assume RH. Let $x\geq 2$ and $c := \frac{1}{2} + \frac{2}{\log x}$. Then there is a prime in
$(x-c\sqrt{x}\log x,x+c\sqrt{x}\log x)$ and at least $\sqrt{x}$ primes in $(x-(c+1)\sqrt{x}\log
x,x+(c+1)\sqrt{x}\log x)$.
\end{theorem}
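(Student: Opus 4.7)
The plan is to apply the Riemann--von Mangoldt explicit formula to $\psi(x+h)-\psi(x-h)$ under RH and to show that the contribution of the nontrivial zeros has modulus strictly smaller than $2h$ when $h=c\sqrt{x}\log x$. A positive lower bound for $\psi(x+h)-\psi(x-h)$, after subtracting the small contribution of prime powers $p^{k}$ with $k\ge 2$ (which is $O(\sqrt{x+h}\log x)$, hence negligible compared with $2h$), then forces $\theta(x+h)>\theta(x-h)$, which is exactly the existence of a prime in $(x-h,x+h)$.

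Concretely, I would start from
\[
\psi(x+h)-\psi(x-h)=2h-\sum_{\rho}\frac{(x+h)^{\rho}-(x-h)^{\rho}}{\rho}+O\!\Bigl(\frac{h}{x}\Bigr),
\]
where $\rho=\tfrac12+i\gamma$ runs over the nontrivial zeros. The identity
\[
\frac{(x+h)^{\rho}-(x-h)^{\rho}}{\rho}=\int_{x-h}^{x+h}u^{\rho-1}\d u,
\]
together with $|u^{\rho-1}|=u^{-1/2}$, yields the two bounds
\[
\left|\int_{x-h}^{x+h}u^{\rho-1}\d u\right|\le\frac{2h}{\sqrt{x-h}},\qquad \left|\frac{(x+h)^{\rho}-(x-h)^{\rho}}{\rho}\right|\le\frac{2\sqrt{x+h}}{|\rho|},
\]
which cross at height $|\gamma|\sim x/h$. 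The sum over $\rho$ is split accordingly, the first bound being used for the low zeros and the second for the high ones, with the tail $|\gamma|>T$ absorbed into the truncated explicit formula error of order $x\log^{2}(xT)/T$. Using explicit forms of $N(T)=\tfrac{T}{2\pi}\log\tfrac{T}{2\pi e}+O(\log T)$ and partial summation, the aim is to reach an estimate
\[
\Bigl|\sum_{\rho}\tfrac{(x+h)^{\rho}-(x-h)^{\rho}}{\rho}\Bigr|\le \sqrt{x}\log x+O(\sqrt{x}),
\]
which is strictly smaller than $2h=\sqrt{x}\log x+4\sqrt{x}$ once $c=\tfrac12+2/\log x$ and $x$ lies above some explicit threshold $x_{0}$ at which the implied $O(\sqrt{x})$ constant drops below $4$. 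The remaining range $2\le x\le x_{0}$ is absorbed into the computational verification of Oliveira e Silva, Herzog and Pardi already cited, which covers everything up to $4\cdot10^{18}$.

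For the second claim I would rerun the same argument with $h$ replaced by $(c+1)\sqrt{x}\log x$. The enlargement adds an extra $2\sqrt{x}\log x$ to the lower bound for $\theta(x+h)-\theta(x-h)$; combined with the trivial inequality $\theta(b)-\theta(a)\le(\pi(b)-\pi(a))\log b$, this produces at least $2\sqrt{x}\log x/\log(x+h)\ge\sqrt{x}$ primes in the wider interval, since $\log(x+h)\le 2\log x$ for $x\ge 2$ and $h\le x$.

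The main obstacle lies entirely in the zero-sum estimate with the sharp constant. A direct use of the pointwise Schoenfeld bound $|\psi(x)-x|<\sqrt{x}\log^{2}x/(8\pi)$ at the two endpoints yields only $h\gtrsim\sqrt{x}\log^{2}x/(8\pi)$, a whole factor of $\log x$ too large for the theorem. Reaching the asymptotic $c\to\tfrac12$ requires exploiting the cancellation among the zero contributions on a short interval rather than bounding the endpoints independently, and every numerical constant entering the zero-density estimates, the partial summations and the truncation error must be tracked very precisely for the final inequality to hold with exactly the stated $c$.
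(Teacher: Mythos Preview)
Your outline has a genuine gap at exactly the point you flag as ``the main obstacle'': the first-order difference $\psi(x+h)-\psi(x-h)$ together with the two crude bounds you list cannot produce the estimate $\bigl|\sum_\rho\bigr|\le\sqrt{x}\log x+O(\sqrt{x})$. The sum $\sum_{|\gamma|>T_0}2\sqrt{x+h}/|\rho|$ diverges, so the tail must be absorbed into the truncation error of the explicit formula for $\psi$, which is of order $x\log^2 x/T$. To make that error $O(\sqrt{x})$ you are forced to take $T\gg\sqrt{x}\log x$, and then the middle range $T_0<|\gamma|\le T$ (with $T_0=x/h\asymp\sqrt{x}/\log x$) contributes, via your second bound and $\sum_{T_0<\gamma\le T}\gamma^{-1}\sim\tfrac{1}{4\pi}(\log^2T-\log^2T_0)$, a term of size $\asymp\sqrt{x}\log x\cdot\log\log x$. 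This extra $\log\log x$ cannot be removed by any choice of $T$, so the method as written yields only $c\to\infty$ (or at best a constant $c$ well above $1/2$), not $c=\tfrac12+2/\log x$.

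The paper avoids this obstruction by working with the \emph{second-order} difference $\psi^{(1)}(x+h)-2\psi^{(1)}(x)+\psi^{(1)}(x-h)=\sum_n\Lambda(n)K(x-n;h)$, where $K$ is the triangular (Fej\'er) kernel. Two things are gained simultaneously. First, each zero now carries a factor $1/(\rho(\rho+1))$, so the tail $|\gamma|>T$ is bounded absolutely by $4(x+h)^{3/2}\sum_{|\gamma|>T}|\rho(\rho+1)|^{-1}\le 4(x+h)^{3/2}\frac{\log T}{\pi T}$; no recourse to the truncated explicit formula (with its dangerous $x\log^2 x/T$) is needed. Second, the three-term combination produces, after a Taylor expansion, the weight $\sin^2(\gamma h/(2x))/\gamma^2$ for the low zeros, and it is precisely the identity $\int_0^\infty\sin^2 t/t^2\,\mathrm{d}t=\pi/2$ that delivers the asymptotic constant $\tfrac12$ in the main term $\tfrac12\sqrt{x}\log x$. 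Choosing $T=\sqrt{x}/(\pi c)$ then balances the remaining secondary terms to $O(\sqrt{x})$ with an explicit constant below $4$ once $x\ge 7.5\cdot10^8$, and the small range is handled by direct computation. Your Schoenfeld observation is correct, but the remedy is not a finer splitting of the first-order sum; it is the passage to the smoothed kernel $K$.
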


The mentioned conclusion coming from the computations of Oliveira e Silva et al{.} is stronger than
the first part of our result for all $x \leq 4\cdot 10^{18}$.
This allows one to use $c=0.55$ for all $x\geq 5$ when only one prime is needed.\\
%G 0.5 + 2/log(4*10^18)
%G \\%59 = 0.5466931600162255414224421196
%
In a recent paper~\cite{Dudek}, the first author proved Theorem \ref{th:A1} with $c= \frac{2}{\pi} =
0.6366\ldots$ and an asymptotic result in the weaker form $c=0.5+\epsilon$ when $x\geq x(\epsilon)$,
without any information on the size of $x(\epsilon)$.

In Appendix~\ref{app:A1} we prove the same result with $c=0.6102$. This value improves on the one
in~\cite{Dudek} and is stronger than Theorem~\ref{th:A1} up to $2\cdot 10^{8}$. Despite its weakness, we
believe that its method of proof is worthy of interest. Indeed, the conclusion is reached proving that
the Riemann hypothesis implies a (very weak) cancellation in the exponential sum $S_\alpha(T):=
\sum_{|\gamma|\leq T} e^{i \alpha \gamma}$, where $\gamma$ runs on the set of imaginary parts of the
nontrivial zeros for the Riemann zeta function. Hypotheses ensuring the existence of stronger
cancellations produce stronger conclusions, and in fact one can show that for all $x$ there is a prime
$p$ such that $|p-x|=o(\sqrt{x\log x})$ assuming the Pair correlation
conjecture~\cite{Heath-BrownGoldston}, and even a prime $p$ with $|p-x|\ll_\epsilon x^\epsilon$ assuming
the stronger Gonek conjecture~\cite{Gonek}. This shows that the method in appendix has a good track
record and could represent the main path for this kind of results. The third author admits his surprise
when we did not manage to prove the best constant $c = 1/2 + \epsilon$ using this method.
\smallskip

We first consider the setting in which we seek to establish Theorem~\ref{th:A1}. Throughout, we define
the von Mangoldt function as
\[
\Lambda(n)
 := \left\{
    \begin{array}{ll}
       \log p  & : \hspace{2mm} n=p^m, \text{ $p$ is prime, $m \in \N$, $m\geq 1$}\\
           0   & : \hspace{2mm}        \text{otherwise},
    \end{array}
    \right.
\]
$\vartheta(x) := \sum_{p \leq x} \log p$, where the sum is restricted to primes, and $\psi(x) := \sum_{n
\leq x} \Lambda(n)$. It is often convenient to work with a smoothed version of $\psi$, and so we define
\[
\psi^{(1)}(x):= \int_0^x \psi(u)\d u
              = \sum_{n\leq x} \Lambda(n)(x-n)
\]
through partial summation. One can recall the integral representation
\[
\psi^{(1)}(x)
=  -\frac{1}{2\pi i}\int_{2-i\infty}^{2+i\infty} \frac{\zeta'}{\zeta}(s) \frac{x^{s+1}}{s(s+1)}\d s
\qquad
\forall x\geq 1
\]
which follows directly from an application of Perron's formula (see, for example, Ingham's classic
text~\cite[Ch~IV, Sec~4]{Ingham2}). We let $h \in \R$ such that $0<h<x$. Then
\[
\psi^{(1)}(x+h) - 2\psi^{(1)}(x) + \psi^{(1)}(x-h)
=
\sum_n \Lambda(n) K(x-n;h)
\]
where $K(u;h):=\max\{h-|u|,0\}$; one can verify this by expanding the left hand side of the above
identity. Note also that $K(u;h)$ is supported on $|u|\leq h$, positive in the open set, and has a unique
maximum at $u=0$ with $K(0;h)=h$.\\
From the integral representation one gets the explicit formula
\[
\psi^{(1)}(x)
=  \frac{x^2}{2}
 - \sum_{\rho} \frac{x^{\rho+1}}{\rho(\rho+1)}
 - x r
 +   r'
 + R^{(1)}(x)
\]
where $\rho$ runs on the set of nontrivial zeros of the Riemann zeta-function, $r$ and $r'$ are
constants, and $|R^{(1)}(x)|\leq 0.6/x$ (one can see~\cite[Lemma~3.3]{GrenieMolteni2}, though this is
classical). Noting that
\[
(x+h)^j - 2x^j + (x-h)^j
= \left\{
  \begin{array}{ll}
    0    & : \hspace{2mm} j=0,1\\
    2h^2 & : \hspace{2mm} j=2,
  \end{array}
  \right.
\]
we thus have that, assuming $h\leq x/\sqrt{3}$,
\[
\sum_n \Lambda(n) K(x-n;h)
= h^2
 - \sum_{\rho} \frac{(x+h)^{\rho+1}-2x^{\rho+1}+(x-h)^{\rho+1}}{\rho(\rho+1)}
 + \frac{3\theta}{x}
\]
for some $\theta=\theta(x,h)\in[-1,1]$, for then
\[
0.6((x+h)^{-1}+2x^{-1}+(x-h)^{-1}) \leq 3x^{-1}.
\]
We split the sum over the zeros as
\[
\sum_{\rho} \frac{(x+h)^{\rho+1}-2x^{\rho+1}+(x-h)^{\rho+1}}{\rho(\rho+1)}
=: \Sigma_1 + \Sigma_2,
\]
with $\Sigma_1$ and $\Sigma_2$ representing the sums on zeros with $|\Imm(\rho)|\leq T$ and
$|\Imm(\rho)|> T$, respectively. It is not a difficult task to bound $\Sigma_2$ (here we repeat the
argument in~\cite{Dudek}). In fact, assuming $RH$,
\[
|\Sigma_2| \leq  4(x+h)^{3/2} \sum_{|\Imm(\rho)|> T} \frac{1}{|\rho(\rho+1)|},
\]
and since $\sum_{|\Imm(\rho)|> T} \frac{1}{|\rho|^2}\leq \frac{\log T}{\pi T}$
(see~\cite[Lemma~1 (ii)]{SkewesII}), one has
\[
|\Sigma_2| \leq 4(x+h)^{3/2} \frac{\log T}{\pi T}.
\]
Thus
\[
\sum_n \Lambda(n) K(x-n;h)
\geq h^2
 - |\Sigma_1|
 - 4(x+h)^{3/2} \frac{\log T}{\pi T}
 - \frac{3}{x}.
\]
Now we remove the contribution from prime powers. Recalling that
\[
0.9986\sqrt{x} \leq \psi(x)-\vartheta(x)\leq (1 + 10^{-6})\sqrt{x} + 3\sqrt[3]{x}
\]
for every $x\geq 121$ (see~\cite[Th.~6]{RosserSchoenfeld2} and \cite[Cor.~2]{PlattTrudgian}), we get
\begin{align*}
\sum_n \Lambda(n) K(x-n;h)
&\leq h\sum_{|n-x|<h} \Lambda(n)                                                                  \\
&\leq h\Big(\sum_{|p-x|<h} \log p + (\psi(x+h)-\vartheta(x+h)) - (\psi(x-h)-\vartheta(x-h))\Big)  \\
&\leq h\Big(\sum_{|p-x|<h} \log p + (1+10^{-6})\sqrt{x+h} + 3\sqrt[3]{x+h} - 0.9986\sqrt{x-h}\Big)\\
&\leq h\Big(\sum_{|p-x|<h} \log p + 0.002\sqrt{x} + 3\sqrt[3]{x} + \frac{2h}{\sqrt{x}}\Big)
\end{align*}
where for the last inequality we have also used that $h\leq x/\sqrt{3}$.
%
% Let $y=h/x$, so
% (1+10^{-6})\sqrt{x+h} + 3\sqrt[3]{x+h} - 0.9986\sqrt{x-h}
% = \sqrt{x}((1+10^{-6})(1+y)^{1/2} - 0.9986(1-y)^{1/2}) + 3\sqrt[3]{x}(1+y)^{1/3}
% for 0<y<1/sqrt{3},
% \leq \sqrt{x}(0.002 + 1.05 y) + \sqrt[3]{x}(3+y)
% =    0.002\sqrt{x} + 3\sqrt[3]{x} + 1.05 \frac{h}{\sqrt{x}}  + \frac{h}{x^{2/3}}
% using x\geq 121
% \leq 0.002\sqrt{x} + 3\sqrt[3]{x} + 2 \frac{h}{\sqrt{x}}.
%
Thus, when $x\geq 121$ we have
\begin{equation}\label{eq:A1}
\sum_{|p-x|<h} \log p
\geq h
 - \frac{1}{h}|\Sigma_1|
 - 4(x+h)^{3/2} \frac{\log T}{\pi hT}
 - 0.002\sqrt{x} - 3\sqrt[3]{x} - \frac{2h}{\sqrt{x}}
 - \frac{3}{xh}.
\end{equation}
It is clear that the positivity of the right hand side guarantees the existence of at least one prime in
the interval $(x-h,x+h)$.

From before, we have that
\[
\Sigma_1 = \sum_{|\Imm(\rho)|\leq T} \frac{(x+h)^{\rho+1}-2x^{\rho+1}+(x-h)^{\rho+1}}{\rho(\rho+1)}.
\]
There are essentially two ways to bound $\Sigma_1$, both of them appearing already in~\cite{Dudek}.

The first one is based on the Taylor identity
\[
(1+\epsilon)^{\frac{3}{2}+i\gamma \epsilon} -2 + (1-\epsilon)^{\frac{3}{2}-i\gamma \epsilon}
= -4\sin^2(\gamma\epsilon) + O(\gamma\epsilon^2),
\]
while the second one is based on the identity
\[
\frac{(x+h)^{\rho+1} - 2x^{\rho+1} + (x-h)^{\rho+1}}{\rho(\rho+1)}
= \int_{x-h}^{x+h} K(x-u;h)u^{\rho-1}\d u.
\]
Thus, denoting $\gamma$ the imaginary part of a nontrivial zero, on the assumption of RH one gets
\begin{equation}\label{eq:A3}
|\Sigma_1| \leq  4\sum_{|\gamma|\leq T} \frac{\sin^2(\gamma\epsilon) + O(\gamma\epsilon^2)}{\gamma^2}
\end{equation}
from the first one, and
\begin{equation}\label{eq:A2}
|\Sigma_1| \leq  \int_{x-h}^{x+h} K(x-u;h)\Big|\sum_{|\gamma|\leq T} u^{i\gamma}\Big|\frac{\d u}{\sqrt{u}}
\end{equation}
from the second one. As a consequence, the first approach takes advantage of the cancellation due to the
sum of the three functions $(1+ \omega\epsilon)^{\frac{3}{2}\omega i\gamma \epsilon}$ with
$\omega\in\{0,\pm 1\}$ for the same zero, while the second approach takes advantage of the cancellation
coming from the sum of values of the same function computed at different zeros.\\
The first approach is discussed in Section~\ref{sec:A5}, while the second is discussed in
Appendix~\ref{app:A1}.

\section{First bound for $\Sigma_1$}
\label{sec:A5} Let $N(T)$ denote the number of nontrivial zeros of $\zeta(s)$ with imaginary part in
$[0,T]$, where multiplicity is included. We state the estimate of $N(T)$ done by Trudgian
in~\cite{TrudgianII}: let $W(T):= \frac{T}{2\pi}\log\big(\frac{T}{2\pi e}\big)$ denote what is
essentially the main term of $N(T)$ and let $\Rp(T) := N(T) - W(T)$, then the result says that
\[
|\Rp(T)| \leq 0.112\log T + 0.278\log\log T + 2.51 + \frac{0.2}{T} + \frac{7}{8} =: R(T)
\qquad T\geq e.
\]
Note that $\Rp(2\pi) = 1$ because the imaginary part of the first zero is $14.13\ldots$, and
that $\d W(T) = \log(\frac{T}{2\pi})\frac{\d T}{2\pi}$.

We introduce the notations
\[
T = \frac{\beta}{c}\frac{\sqrt{x}}{\log x},
\qquad
h = c\sqrt{x}\log x
\]
for suitable $\beta$ and $c$.

\begin{lemma}\label{lem:A1}
Let $0\leq h < x$. Then for every $\gamma\in\R$ there exists $\theta\in\C$ with $|\theta|\leq 1$ such
that
\[
\Big(1+\frac{h}{x}\Big)^{\frac{3}{2}+i\gamma} -2  + \Big(1-\frac{h}{x}\Big)^{\frac{3}{2}+i\gamma}
= -4\sin^2\Big(\frac{\gamma h}{2x}\Big) + \theta (2|\gamma|+1)\frac{h^2}{x^2}.
\]
\end{lemma}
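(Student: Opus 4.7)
The plan is a second-order Taylor expansion with integral remainder, keyed on a specific algebraic cancellation. Set $\epsilon := h/x \in [0,1)$; using the identity $-4\sin^2(\gamma\epsilon/2) = 2\cos(\gamma\epsilon)-2$, the claim is equivalent to the bound $|\Delta(\epsilon)|\leq(2|\gamma|+1)\epsilon^2$ for
\[
\Delta(\epsilon) := (1+\epsilon)^{3/2+i\gamma} + (1-\epsilon)^{3/2+i\gamma} - 2\cos(\gamma\epsilon).
\]
The function $\Delta$ is even in $\epsilon$ and satisfies $\Delta(0)=\Delta'(0)=0$, so Taylor's theorem yields $\Delta(\epsilon) = \int_0^\epsilon (\epsilon-s)\Delta''(s)\,ds$.

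A direct differentiation combined with the key algebraic identity $(3/2+i\gamma)(1/2+i\gamma) = 3/4+2i\gamma-\gamma^2$ allows me to regroup
\[
\Delta''(s) = (3/4+2i\gamma)\bigl[(1+s)^{-1/2+i\gamma}+(1-s)^{-1/2+i\gamma}\bigr] + \gamma^2\bigl\{2\cos(\gamma s) - (1+s)^{-1/2+i\gamma} - (1-s)^{-1/2+i\gamma}\bigr\}.
\]
The deliberate point: the dominant $-\gamma^2$ contribution of the product is absorbed into the second bracket, which vanishes at $s=0$ and represents the correction that $-4\sin^2(\gamma\epsilon/2)$ is engineered to capture. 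This reorganisation is what ultimately makes the final bound linear rather than quadratic in $|\gamma|$.

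Next I would bound each summand. The first has coefficient $|3/4+2i\gamma|\leq 3/4+2|\gamma|$ and integrand of mild growth; it is controlled by the explicit antiderivative
\[
\int_0^\epsilon (\epsilon-s)[(1+s)^{-1/2+i\gamma}+(1-s)^{-1/2+i\gamma}]\,ds = \frac{(1+\epsilon)^{3/2+i\gamma}+(1-\epsilon)^{3/2+i\gamma}-2}{(1/2+i\gamma)(3/2+i\gamma)},
\]
whose denominator of magnitude $\gtrsim\gamma^2$ damps the bounded numerator. For the $\gamma^2$-bracket I would factor $(1\pm s)^{-1/2+i\gamma}=e^{\pm i\gamma s}(1\pm s)^{-1/2}\exp(i\gamma[\log(1\pm s)\mp s])$ and apply the elementary estimates $|e^{ix}-1|\leq|x|$, $|\log(1+s)-s|\leq s^2/2$, $|(1+s)^{-1/2}-1|\leq s/2$, revealing that the bracket is of order $|\gamma|s^2$; integrating against $(\epsilon-s)$ yields the advertised $O(|\gamma|\epsilon^2)$ contribution.

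The main obstacle is the regime $\epsilon\to 1^-$, where the pointwise quantities $(1-s)^{-1/2}$ and $|\log(1-s)+s|$ diverge, so the above bounds degrade. The integral antiderivative tames the first summand even in this regime (its numerator stays bounded), and for the $\gamma^2$-bracket I would exploit that products such as $(1-s)^{3/2}[-\log(1-s)-s]$ remain bounded on $[0,1]$, softening the apparent blow-up. As a safety net, the uniform estimate $|\Delta(\epsilon)|\leq 2^{3/2}+3<6$ is already dominated by $(2|\gamma|+1)\epsilon^2$ whenever $|\gamma|$ and $\epsilon$ are not both small, and the remaining finitely many small-parameter cases are checked by direct one-variable calculation.
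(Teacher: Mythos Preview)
The paper's own proof is a single sentence pointing to the Taylor expansion of $\log(1+u)$: one writes $(1\pm\epsilon)^{3/2+i\gamma}=e^{(3/2+i\gamma)\log(1\pm\epsilon)}$, extracts the factor $e^{\pm i\gamma\epsilon}$ (whose sum gives $2\cos(\gamma\epsilon)=2-4\sin^2(\gamma\epsilon/2)$), and controls the remaining pieces with elementary inequalities. Your route is genuinely different: you Taylor--expand $\Delta$ itself with integral remainder and exploit the algebraic identity $(3/2+i\gamma)(1/2+i\gamma)=3/4+2i\gamma-\gamma^2$ to regroup $\Delta''$. That regrouping is a nice structural explanation of \emph{why} $-4\sin^2(\gamma\epsilon/2)$ is the correct comparison term, something the paper's one-liner does not illuminate.

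That said, your sketch has two real gaps. First, the bookkeeping as written does not actually yield the constant $2|\gamma|+1$. If you bound the first summand by $|3/4+2i\gamma|\int_0^\epsilon(\epsilon-s)[(1+s)^{-1/2}+(1-s)^{-1/2}]\,ds=\sqrt{9/16+4\gamma^2}\cdot\tfrac{4}{3}\bigl[(1+\epsilon)^{3/2}+(1-\epsilon)^{3/2}-2\bigr]$, the coefficient of $\epsilon^2$ already exceeds $2|\gamma|$ for large $|\gamma|$ (the integral behaves like $\approx 1.1\,\epsilon^2$ near $\epsilon=1$), before you even add the second summand. Your alternative via the closed-form antiderivative damps the $\gamma$-dependence but then loses the $\epsilon^2$ scale on the numerator, so you would need to interpolate between the two estimates according to the size of $|\gamma|\epsilon$; this is feasible but is not what you wrote. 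Second, the ``safety net'' is not a valid argument: $(\gamma,\epsilon)$ ranges over a continuum, so ``finitely many small-parameter cases'' is meaningless, and the crude bound $|\Delta(\epsilon)|<6$ is \emph{not} dominated by $(2|\gamma|+1)\epsilon^2$ merely when ``$|\gamma|$ and $\epsilon$ are not both small'' (e.g.\ $|\gamma|=100$, $\epsilon=0.01$). To close the argument near $\epsilon=1$ you need a genuine uniform estimate over that regime, for instance by exploiting that $(1-\epsilon)^{3/2}$ kills the singular behaviour of $\log(1-\epsilon)$, which is closer in spirit to the paper's $\log$-based approach.
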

\begin{proof}
The proof is straightforward and follows from the Taylor expansion of $\log(1+u)$ and some elementary
inequalities.
\end{proof}
Thus we get an explicit version of~\eqref{eq:A3}:
\begin{align*}
|\Sigma_1|
%&\leq 2x^{3/2}\sum_{0<\gamma\leq T} \frac{4\sin^2\big(\frac{\gamma h}{2x}\big)}{\gamma^2}
%    + 2x^{3/2}\frac{h^2}{x^2}\sum_{0<\gamma\leq T} \frac{2\gamma+1}{\gamma^2}\\
&\leq 8x^{3/2}\sum_{0<\gamma\leq T} \frac{\sin^2\big(\frac{\gamma h}{2x}\big)}{\gamma^2}
    + 2\frac{h^2}{\sqrt{x}}\sum_{0<\gamma\leq T} \frac{2\gamma+1}{\gamma^2}.
\end{align*}
From~\cite[Lemma 1 (i,iii)]{SkewesII}, we have that $\sum_{0<\gamma<T}\frac{1}{\gamma}\leq
\frac{\log^2 T}{4\pi}$ and $\sum_{\gamma>0} \frac{1}{\gamma^2}\leq \frac{1}{40}$ and thus
\begin{equation}\label{eq:A7}
|\Sigma_1|
\leq 8x^{3/2}\sum_{0<\gamma\leq T} \frac{\sin^2\big(\frac{\gamma h}{2x}\big)}{\gamma^2}
    + \Big(\frac{\log^2 T}{\pi}+\frac{1}{20}\Big)\frac{h^2}{\sqrt{x}}.
\end{equation}
Let $\gamma_1 = 14.13\ldots$ be the imaginary part of the first non-trivial zero of $\zeta(s)$. By
partial summation we get
\begin{align*}
\sum_{0<\gamma\leq T} \frac{\sin^2\big(\frac{\gamma h}{2x}\big)}{\gamma^2}
&= \int_{\gamma_1^-}^{T^+} \frac{\sin^2\big(\frac{\gamma h}{2x}\big)}{\gamma^2} \d N(\gamma)                 \\
&=\Big[\frac{\sin^2\big(\frac{\gamma h}{2x}\big)}{\gamma^2} N(\gamma)\Big]\Big|_{\gamma_1^-}^{T^+}
 -\int_{\gamma_1}^{T} \Big[\frac{\sin^2\big(\frac{\gamma h}{2x}\big)}{\gamma^2}\Big]' N(\gamma)\d\gamma    \\
&=\frac{\sin^2\big(\frac{hT}{2x}\big)}{T^2} N(T)
 -\int_{\gamma_1}^{T} \Big[\frac{\sin^2\big(\frac{\gamma h}{2x}\big)}{\gamma^2}\Big]' N(\gamma)\d\gamma.
\end{align*}
It then follows that
\begin{align*}
\sum_{0<\gamma\leq T} \frac{\sin^2\big(\frac{\gamma h}{2x}\big)}{\gamma^2}
={}&\frac{\sin^2\big(\frac{hT}{2x}\big)}{T^2} N(T)
 -\int_{\gamma_1}^{T} \Big[\frac{\sin^2\big(\frac{\gamma h}{2x}\big)}{\gamma^2}\Big]' \frac{\gamma}{2\pi}\log\Big(\frac{\gamma}{2\pi e}\Big)\d\gamma
 -\int_{\gamma_1}^{T} \Big[\frac{\sin^2\big(\frac{\gamma h}{2x}\big)}{\gamma^2}\Big]' \Rp(\gamma)\d\gamma  \\
%
%&=\frac{\sin^2\big(\frac{hT}{2x}\big)}{T^2} N(T)
% -\frac{\sin^2\big(\frac{\gamma h}{2x}\big)}{\gamma^2}\frac{\gamma}{2\pi}\log\Big(\frac{\gamma}{2\pi e}\Big)\Big|_{\gamma_1^-}^{T}
% +\int_{\gamma_1}^{T} \frac{\sin^2\big(\frac{\gamma h}{2x}\big)}{\gamma^2}\log\Big(\frac{\gamma}{2\pi}\Big)\frac{\d\gamma}{2\pi}
% -\int_{\gamma_1}^{T} \Big[\frac{\sin^2\big(\frac{\gamma h}{2x}\big)}
%{\gamma^2}\Big]' \Rp(\gamma)\d\gamma\\
%
={}&\frac{\sin^2\big(\frac{hT}{2x}\big)}{T^2} U(T)
 + \frac{\sin^2\big(\frac{\gamma_1 h}{2x}\big)}{2\pi\gamma_1}\log\Big(\frac{\gamma_1}{2\pi e}\Big)           \\
&+ \int_{\gamma_1}^{T} \frac{\sin^2\big(\frac{\gamma h}{2x}\big)}{\gamma^2}\log\Big(\frac{\gamma}{2\pi}\Big)\frac{\d\gamma}{2\pi}
 - \int_{\gamma_1}^{T} \Big[\frac{\sin^2\big(\frac{\gamma h}{2x}\big)}{\gamma^2}\Big]' \Rp(\gamma)\d\gamma.
\end{align*}
Recalling the upper bound $|\Rp(T)|\leq R(T)$ and noticing that $\gamma_1 < 2\pi e$, we get
\begin{align*}
\sum_{0<\gamma\leq T} \frac{\sin^2\big(\frac{\gamma h}{2x}\big)}{\gamma^2}
& \leq \frac{R(T)}{T^2}
 +\int_{\gamma_1}^{T} \frac{\sin^2\big(\frac{\gamma h}{2x}\big)}{\gamma^2}\log\Big(\frac{\gamma}{2\pi}\Big)\frac{\d\gamma}{2\pi}
 -\int_{\gamma_1}^{T} \Big[\frac{\sin^2\big(\frac{\gamma h}{2x}\big)}{\gamma^2}\Big]' \Rp(\gamma)\d\gamma    \\
%
%&\leq \frac{R(T)}{T^2}
% +\int_{\gamma_1}^{T} \frac{\sin^2\big(\frac{\gamma h}{2x}\big)}{\gamma^2} \log\Big(\frac{\gamma}{2\pi}\Big)\frac{\d\gamma}{2\pi}
% -\int_{\gamma_1}^{T} \Big[\frac{h}{2x}\frac{\sin\big(\frac{\gamma h}{x}\big)}{\gamma^2} - 2\frac{\sin^2\big(\frac{\gamma h}{2x}\big)}{\gamma^3}\Big] \Rp(\gamma)\d\gamma\\
%
&\leq \frac{R(T)}{T^2}
 +\int_{\gamma_1}^{T} \frac{\sin^2\big(\frac{\gamma h}{2x}\big)}{\gamma^2} \log\Big(\frac{\gamma}{2\pi}\Big)\frac{\d\gamma}{2\pi}
 +\int_{\gamma_1}^{T} \Big|\frac{h}{2x}\frac{\sin\big(\frac{\gamma h}{x}\big)}{\gamma^2} - 2\frac{\sin^2\big(\frac{\gamma h}{2x}\big)}{\gamma^3}\Big| R(\gamma)\d\gamma.
\intertext{Using the inequality $|\sin^2 v|\leq\frac{3}{4}|v|$, we simplify to get}
\sum_{0<\gamma\leq T} \frac{\sin^2\big(\frac{\gamma h}{2x}\big)}{\gamma^2}
&\leq \frac{R(T)}{T^2}
 +\int_{\gamma_1}^{T} \frac{\sin^2\big(\frac{\gamma h}{2x}\big)}{\gamma^2} \log\Big(\frac{\gamma}{2\pi}\Big)\frac{\d\gamma}{2\pi}
 +\frac{5h}{4x} \int_{\gamma_1}^{T} \frac{R(\gamma)}{\gamma^2}\d\gamma.
\intertext{Since $\int_{14}^{+\infty} \frac{R(\gamma)}{\gamma^2}\d\gamma\leq 0.297$, we get}
%
%G R(u) = 0.112*log(u)+0.278*log(log(u)) + 2.51 + 0.2/u + 7/8;
%G intnum(u=14,oo,R(u)/u^2)
%G \\%2 = 0.2964505932794089238835636091
%
\sum_{0<\gamma\leq T} \frac{\sin^2\big(\frac{\gamma h}{2x}\big)}{\gamma^2}
&\leq \frac{1}{2\pi}\int_{\gamma_1}^{T} \frac{\sin^2\big(\frac{\gamma h}{2x}\big)}{\gamma^2} \d\gamma \log\Big(\frac{T}{2\pi}\Big)
     + \frac{R(T)}{T^2}
     + \frac{2.97h}{8x}  \\
&\leq
 \frac{h}{4\pi x}\int_{0}^{\frac{hT}{2x}} \frac{\sin^2 t}{t^2} \d t \log\Big(\frac{T}{2\pi}\Big)
 +\frac{R(T)}{T^2}
 +\frac{2.97h}{8x}.
\end{align*}
We can bound the integral in the above equation with ease, for
\begin{align*}
\int_{0}^{y} \frac{\sin^2 t}{t^2} \d t
&%= \int_{0}^{\infty} \frac{\sin^2 t}{t^2} \d t - \int_{y}^{\infty} \frac{\sin^2 t}{t^2} \d t
 = \frac{\pi}{2} - \int_{y}^{\infty} \frac{\sin^2 t}{t^2} \d t
= \frac{\pi}{2} - \int_{y}^{\infty} \frac{1-\cos(2t)}{2t^2} \d t \\
& = \frac{\pi}{2} - \frac{1}{2y} + \int_{y}^{\infty} \frac{\cos(2t)}{2t^2} \d t
= \frac{\pi}{2} - \frac{1}{2y} - \frac{\sin(2y)}{4y^2} + \int_{y}^{\infty} \frac{\sin(2t)}{2t^3} \d t \\
& = \frac{\pi}{2} - \frac{1}{2y} - \frac{\sin(2y)}{4y^2} + \frac{\theta}{4y^2}
= \frac{\pi}{2} - \frac{1}{2y} + \frac{\theta}{2y^2}
\end{align*}
for some $\theta\in [-1,1]$. As such, we can now use $R(T)\leq 1.5 \log T$ for every $T\geq \gamma_1$
%
%G ploth(u=14.13,100,R(u)/log(u))
%G \\%6 = [14.130000000000000782, 100.00000000000000000, 0.9396688552296756924, 1.4977581138139277606]
%
to get
\begin{align*}
\sum_{0<\gamma\leq T}\frac{\sin^2\big(\frac{\gamma h}{2x}\big)}{\gamma^2}
\leq  \frac{h}{8 x}\Big(1 - \frac{2}{\pi}\frac{x}{hT} + \frac{4}{\pi}\frac{x^2}{h^2T^2}\Big)\log\Big(\frac{T}{2\pi}\Big)
     + 1.5\frac{\log T}{T^2}
     + \frac{2.97h}{8x}
\end{align*}
so that~\eqref{eq:A7} becomes
\begin{align}
|\Sigma_1|
&\leq 8x^{3/2}\Big(\frac{h}{8 x}\Big(1 - \frac{2}{\pi}\frac{x}{hT} + \frac{4}{\pi}\frac{x^2}{h^2T^2}\Big)\log\Big(\frac{T}{2\pi}\Big)
                   +1.5\frac{\log T}{T^2}
                   +\frac{2.97h}{8x}
              \Big)
    + \Big(\frac{\log^2 T}{\pi}+\frac{1}{20}\Big)\frac{h^2}{\sqrt{x}}            \notag\\
&=    h\sqrt{x}\Big(1 - \frac{2}{\pi}\frac{x}{hT} + \frac{4}{\pi}\frac{x^2}{h^2T^2}\Big)\log\Big(\frac{T}{2\pi}\Big)
    + 12x^{3/2}\frac{\log T}{T^2}
    + 2.97h\sqrt{x}
    + \Big(\frac{\log^2 T}{\pi}+\frac{1}{20}\Big)\frac{h^2}{\sqrt{x}}            \notag\\
&= h\sqrt{x}\log\Big(\frac{T}{2\pi}\Big)
    - \frac{2}{\pi}h\sqrt{x}\frac{x}{hT}\log\Big(\frac{T}{2\pi}\Big)
    + \frac{4}{\pi}h\sqrt{x}\frac{x^2}{h^2T^2}\log\Big(\frac{T}{2\pi}\Big)       \notag\\
&\hspace{0.5cm}
    + 2.97h\sqrt{x}
    + \frac{\log^2 T}{\pi}\frac{h^2}{\sqrt{x}}
    + 12x^{\frac{3}{2}}\frac{\log T}{T^2}
    + \frac{h^2}{20\sqrt{x}}.                                                    \label{eq:A8}
\end{align}

\section{Proof of Theorem~\ref{th:A1}}
\label{sec:A6}
Substituting~\eqref{eq:A8} into~\eqref{eq:A1} we get
\begin{multline*}
\sum_{|p-x|<h} \log p
\geq h
 - \Big[\sqrt{x}\log\Big(\frac{T}{2\pi}\Big)
         - \frac{2}{\pi}\sqrt{x}\frac{x}{hT}\log\Big(\frac{T}{2\pi}\Big)
         + \frac{4}{\pi}\sqrt{x}\frac{x^2}{h^2T^2}\log\Big(\frac{T}{2\pi}\Big)\\
         + 2.97\sqrt{x}
         + \frac{\log^2 T}{\pi}\frac{h}{\sqrt{x}}
         + 12x^{3/2}\frac{\log T}{hT^2}
         + \frac{h}{20\sqrt{x}}
   \Big]
 - 4(x+h)^{3/2} \frac{\log T}{\pi hT}
 - 0.002\sqrt{x} - 3\sqrt[3]{x} - \frac{2h}{\sqrt{x}}
 - \frac{3}{xh}.
\end{multline*}
Recalling that we have set $h= c\sqrt{x}\log x$, $T=\frac{\beta}{c}\frac{\sqrt{x}}{\log x}$ (so that
$hT=\beta x$), and estimating $(x+h)^{3/2}\leq x^{3/2}(1+2\frac{h}{x})$ (which holds whenever $h/x\leq
1.6$), we have that
\begin{multline*}
\sum_{|p-x|<h} \log p
\geq h
       - \sqrt{x}\log\Big(\frac{T}{2\pi}\Big)
       + \frac{2}{\pi}\frac{\sqrt{x}}{\beta}\log\Big(\frac{T}{2\pi}\Big)
       - \frac{4}{\pi}\Big(1+2\frac{h}{x}\Big)\sqrt{x}\frac{\log T}{\beta}
       - 3\sqrt{x}\\
       - \frac{4}{\pi}\frac{\sqrt{x}}{\beta^2}\log\Big(\frac{T}{2\pi}\Big)
       - \frac{\log^2 T}{\pi}\frac{h}{\sqrt{x}}
       - 12\frac{h}{\beta^2\sqrt{x}}\log T
       - 3\sqrt[3]{x}
       - 2.05\frac{h}{\sqrt{x}}
       - \frac{3}{xh},
\end{multline*}
or, upon gathering like terms, that
\begin{multline}\label{eq:A9}
\sum_{|p-x|<h} \log p
\geq h
       - \sqrt{x}\log T
       - \frac{2}{\pi}\frac{\sqrt{x}}{\beta}\log T
       - (3 - \log(2\pi))\sqrt{x}                                          \\
       - \frac{2}{\pi}\log(2\pi)\frac{\sqrt{x}}{\beta}
       - \frac{4}{\pi}\frac{\sqrt{x}}{\beta^2}\log\Big(\frac{T}{2\pi}\Big)
       - 3\sqrt[3]{x}
       - \frac{8}{\pi}\frac{\log T}{\beta}\frac{h}{\sqrt{x}}
       - \frac{\log^2 T}{\pi}\frac{h}{\sqrt{x}}
       - \Big(2.05+12\frac{\log T}{\beta^2}\Big)\frac{h}{\sqrt{x}}
       - \frac{3}{xh}.
\end{multline}

For this computation it is convenient to take $\beta=\beta(x)$ and diverging as $x$ goes to $\infty$. To
ensure the best result we have to set $\beta$ so that the sum $\log T + \frac{2}{\pi}\frac{\log T}{\beta}$
is minimised. This sum is, up to terms of lower order in $\beta$,
\[
\log \beta + \frac{\log x}{\pi\beta}.
\]
This last sum is minimum when
\[
\beta = \frac{1}{\pi} \log x.
\]
Thus we have $T=\frac{1}{\pi c}\sqrt{x}$. With this, the lower bound~\eqref{eq:A9} becomes
\begin{align*}
\sum_{|p-x|<h} \log p
&\geq
h
 - \frac{1}{2}\sqrt{x}\log x
 - (4 - \log(2c\pi^2))\sqrt{x}
 + o(\sqrt{x}).
\end{align*}
Using the fact that $c\geq 1/2$, which is the best we can do in this setting, in order to have a positive
lower bound it is sufficient to take
\[
h \geq \Big(\frac{1}{2} + \frac{d}{\log x}\Big)\sqrt{x}\log x
\]
for any $d > 4 - 2\log \pi = 1.7105\ldots$,
%
%G 4-2*log(Pi)
%G \\%4 = 1.710540228301199651713145297
%
when $x$ is large enough. Actually, the choice $d=1.72$ holds only for $x\geq \exp(590)\approx 2\cdot 10^{256}$.
On the contrary, the choice $d = 2$ holds for $x\geq 7.5\cdot 10^{8}$. Thus the claim asserting the
existence of a prime when $c=1/2 + 2/\log x$ is proved for $x\geq 7.5\cdot 10^{8}$. Moreover, the upper
bound $\log(x+h) \sum_{|p-x|< h}1 \geq \sum_{|p-x|< h}\log p$ and~\eqref{eq:A9} prove the existence of
$\sqrt{x}$ primes in $(x-(c+1)\sqrt{x}\log x, x+(c+1)\sqrt{x}\log x)$ for $x\geq 1.4\cdot 10^{5}$.
%
%\\ per verificare la seconda affermazione del teorema:
%
%G
%G minor5(d,u)=
%G {my(logT = 1/2*u - log(Pi*(1+1/2+d/u)), b=u/Pi, hrad=(1+1/2+d/u)*u);
%G  (hrad
%G    - logT
%G    - 2/Pi/b*logT
%G    - (3-log(2*Pi))
%G    - 2/Pi*log(2*Pi)/b
%G    - 4/Pi/b^2*(logT-log(2*Pi))
%G    - if (u < 1e5,my(x=exp(u));
%G          3/x^(1/6)
%G        + 8/Pi*logT/b*hrad/sqrt(x)
%G        + logT^2/Pi*hrad/sqrt(x)
%G        + (2.05+12/b^2*logT)*hrad/sqrt(x)
%G        + 3/hrad/x^2
%G      )
%G   )/(u + if(u<1e5,log(1+hrad/exp(u/2))))
%G };
%G
%G my(d=2);ploth(u=6,100,u*(minor5(d,u)-1))
%G \\%2 = [6.000000000000000000, 100.00000000000000000, -4.038590092531618048, 1.3372707451593377481]
%G my(d=2);solve(u=6,40,minor5(d,u)-1)     \\<---- verifico quando \`{e} maggiore di 1
%G \\%3 = 11.78405863087828745609036330
%G exp(11.8)
%G \\%34 = 133252.3529455309397353820661
%
Lastly, for $x\in[2,1.4\cdot 10^{5}]$ it is sufficient to check that $p_{k+1}-p_k \leq 2c\sqrt{p_k}\log p_k$
(which gives the claim for $x\in[p_k,p_{k+1}]$) when $k\leq 13010$.
%
%G \\ this proves the claim when $x\geq 2$:
%G \\ primepi(1.4e5) = 13010;  <--- 13010 suffices
%G {
%G   my(M = 1.4e5,Nk = 100,s = M/Nk);
%G   for(k = 0, Nk-1,
%G     my(n = primepi(s*k), p2, mul = 2*(0.5+1));
%G     forprime(p1 = s*k, s*(k+1),
%G       my(sp1 = sqrt(p1));
%G       n++;
%G       p2=prime(n+ceil(sp1));
%G       if(p2-p1>=mul*sp1*log(p1),print(n," ",p2," no"))
%G     );
%G     print("Slice ", k, " done")
%G   )
%G }
%
\section{An application}
\label{sec:A7}

On the Riemann hypothesis, Cram\'{e}r~\cite{Cramer} was the first to prove the bound $p_{n+1} - p_n \ll
\sqrt{p_n} \log p_n$, and he noted the implication that there exists some constant $\alpha>0$ such that
there will be a prime in the interval
\[
(n^2, (n+\alpha \log n)^2)
\]
for all sufficiently large $n$. This was intended for comparison to Legendre's conjecture that there is a
prime in the interval $(n^2, (n+1)^2)$ for all $n$. The following corollary of Theorem~\ref{th:A1} states
that one can take $\alpha = 1+o(1)$.

\begin{corollary} \label{cor1}
Assume RH. Then for every integer $n \geq 2$ there is a prime in the interval
\[
(n^2, (n+\alpha \log n)^2)
\]
where
\[
\alpha := 1 + \frac{2}{\log n} + \frac{1}{\log^2 n}.
\]
\end{corollary}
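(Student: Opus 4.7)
The plan is to apply Theorem~\ref{th:A1} at the midpoint $x^* := \tfrac{1}{2}\bigl(n^2 + (n+\alpha\log n)^2\bigr)$ of the target interval and to show that the sub-interval $\bigl(x^*-c\sqrt{x^*}\log x^*,\, x^*+c\sqrt{x^*}\log x^*\bigr)$ produced by the theorem, with $c = \tfrac{1}{2}+2/\log x^*$, sits inside $(n^2,(n+\alpha\log n)^2)$. Writing $n_1 := n+\alpha\log n$, the target interval has half-length $\tfrac{1}{2}(n_1^2-n^2) = \alpha n\log n + \tfrac{1}{2}\alpha^2\log^2 n$, so by the symmetry of the two intervals about $x^*$ the containment reduces to the single inequality
\[
c\sqrt{x^*}\log x^* \leq \alpha n\log n + \tfrac{1}{2}\alpha^2\log^2 n.
\]

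Next I would bound the left-hand side by the trivial estimates $\sqrt{x^*}\leq n_1$, $\log x^*\leq 2\log n_1 \leq 2\log n + 2\alpha\log n/n$ (via $\log(1+u)\leq u$), and $c\leq \tfrac{1}{2}+1/\log n$ (from $\log x^*\geq 2\log n$). Multiplying these out turns the left-hand side into an explicit polynomial in $\log n$ and $1/n$. The defining value of $\alpha$ is tuned precisely so that
\[
\alpha n\log n = n\log n + 2n + n/\log n,
\]
which absorbs the two leading terms $n\log n + 2n$ of that polynomial and leaves a surplus of order $n/\log n + \tfrac{1}{2}\alpha^2\log^2 n$ on the right to control the residual $(\alpha-\tfrac{1}{2}\alpha^2)\log^2 n + O(\alpha\log n)$. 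For $n$ large enough that $\alpha < 2$, the remaining condition simplifies to one of the shape $\log^3 n \ll n$, which holds with a very wide margin.

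The main obstacle will be the range of small $n$, where $\alpha$ is not close to $1$ (for instance $\alpha\approx 5.97$ when $n=2$) and the Taylor-type bounds above become loose. I would dispatch them by a finite case-check: for each such $n$ the target interval is extremely wide (e.g.\ $(4,37.6)$ for $n=2$, $(9,49.2)$ for $n=3$), so either the midpoint construction continues to verify numerically or one simply exhibits a prime directly. More systematically, the unconditional bound $p_{k+1}-p_k < \log^2 p_k$ for primes up to $4\cdot 10^{18}$ recalled in Section~\ref{sec:A1} produces a prime inside $\bigl(n^2,\, n^2+\log^2(n^2)\bigr) \subset \bigl(n^2,\,(n+\alpha\log n)^2\bigr)$ whenever $n^2 \leq 4\cdot 10^{18}$, which comfortably covers every $n$ for which the asymptotic estimate is not yet sharp.
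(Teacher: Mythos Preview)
Your approach is essentially the paper's: apply Theorem~\ref{th:A1} at the midpoint $x^*$ and show the resulting interval is contained in $(n^2,(n+\alpha\log n)^2)$, which reduces to exactly the half-length inequality you wrote. The only difference is in dispatching that inequality: the paper substitutes $\beta:=\alpha\log n/n$, rewrites the containment as
\[
\sqrt{1+\beta+\tfrac{\beta^2}{2}}\Bigl(2\log n+4+\log\bigl(1+\beta+\tfrac{\beta^2}{2}\bigr)\Bigr)\leq 2\log n+4+\tfrac{2}{\log n}+n\beta^2,
\]
and observes this is elementary for every $n\geq 2$, so no split into large/small $n$ and no appeal to the Oliveira e Silva computation is needed.
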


\begin{proof}
Let
\[
x := \frac{n^2 + (n+\alpha \log n)^2}{2}
\]
be the mid-point of the interval. We will prove that
$(x-c\sqrt{x}\log x,x+c\sqrt{x}\log x) \subseteq (n^2, (n+\alpha \log n)^2)$ with
$c = \frac{1}{2} + \frac{2}{\log x}$ so that the corollary will be a consequence of the theorem. Let
$\beta:=\alpha\frac{\log n}{n}$ and observe that $x=n^2(1+\beta+\frac{\beta^2}{2})$. We just need to prove
that $n^2\leq x-c\sqrt{x}\log x$. It holds if and only if
\[
\sqrt{x}(\log x + 4) \leq n^2(2\beta + \beta^2)
\]
which is equivalent to
\[
\sqrt{1+\beta+\frac{\beta^2}{2}}
\Big(2\log n + 4 + \log\Big(1+\beta+\frac{\beta^2}{2}\Big)\Big)
                               \leq 2\log n + 4 + \frac{2}{\log n} + n\beta^2.
\]
The last inequality is elementary and is true for any $n\geq 2$.
%G f(n)=my(be=(log(n)+2+1/log(n))/n);sqrt(1+be+be^2/2)*(2*log(n)+4+log(1+be+be^2/2))/(2*log(n)+4+2/log(n)+n*be^2);
%G ploth(n=2,10,f(n))
%G ploth(n=10,10^2,f(n))
%G ploth(n=10^2,10^4,f(n))
\end{proof}

Now, upon setting $n = p_k$ in the above corollary, it follows that there is a prime in the interval
\[
(p_k^2, (p_k + \alpha \log p_k)^2)
\]
for all $k \geq 1$. It should be noted, as $\alpha = 1+o(1)$ and the average gap between $p_k$ and
$p_{k+1}$ is $\log p_k$, that something can be said here about the existence of primes in the interval
$(p_k^2, p_{k+1}^2)$; this is related to the so-called Brocard conjecture predicting the existence of
four primes at least in this interval (see for instance Ribenboim~\cite[p.~248]{Ribenboim4}).

It was first proven by Cram\'{e}r~\cite{Cramer}, on RH, that the number of $n<x$ such that there is no prime
in the interval $(n^2,(n+1)^2)$ is $O(x^{2/3+\epsilon})$ (improved to $O(x^{1/2+\epsilon})$
unconditionally and to $O((\log x)^{2+\epsilon})$ on RH in~\cite{Bazzanella}),
%
% WARNING here! Bazzanella Th.1 proves that the number of integers $n$ with $[n^2,(n+1)^2]\in[0,N]$
% such that Legendre conj. is false for $n$ is at most $O(N^{1/4+\epsilon})$. Hence to compare this
% result with Cram\'{e}r's the normalization $x---> \sqrt{N}$ needs.
%
and from this it follows that there is a prime in \emph{almost all} intervals of the form $(p_k^2,
p_{k+1}^2)$. However, there may still be infinitely many exceptions, though the following corollary
assures us that the exceptions must occur when the prime gap is essentially less than the average gap.

\begin{corollary}
Assume RH. Suppose that $p_k$ and $p_{k+1}$ are consecutive primes satisfying
\[
p_{k+1} - p_k \geq \alpha \log p_k + \frac{\alpha^2 \log^2 p_k}{2 p_k}
\]
where $\alpha := \big(1 + \frac{1}{\log p_k}\big)^2$. Then there is a prime in the
interval $(p_k^2, p_{k+1}^2)$.
\end{corollary}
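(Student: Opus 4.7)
The plan is to deduce this corollary directly from Corollary~\ref{cor1} by a short algebraic manipulation. First I would note that the constant in the statement satisfies $\alpha = \bigl(1 + \tfrac{1}{\log p_k}\bigr)^2 = 1 + \tfrac{2}{\log p_k} + \tfrac{1}{\log^2 p_k}$, which is exactly the value produced by Corollary~\ref{cor1} when one takes $n := p_k$. Since $p_k \geq 2$, applying that corollary immediately furnishes a prime $q$ with
\[
p_k^2 < q < (p_k + \alpha \log p_k)^2.
\]

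Next I would show that the hypothesis of the present corollary forces $(p_k + \alpha \log p_k)^2 \leq p_{k+1}^2$, so that $q$ lies in $(p_k^2, p_{k+1}^2)$ as required. Expanding the square, this amounts to
\[
2 p_k \alpha \log p_k + \alpha^2 \log^2 p_k \leq p_{k+1}^2 - p_k^2.
\]
Using the factorisation $p_{k+1}^2 - p_k^2 = (p_{k+1} - p_k)(p_{k+1} + p_k) \geq 2 p_k (p_{k+1} - p_k)$, it suffices to verify
\[
2 p_k (p_{k+1} - p_k) \geq 2 p_k \alpha \log p_k + \alpha^2 \log^2 p_k,
\]
which, after dividing by $2p_k$, is precisely the assumed lower bound on $p_{k+1} - p_k$.

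There is no real obstacle here: the whole argument is a one-line invocation of Corollary~\ref{cor1} followed by a two-line manipulation verifying that the stated gap condition is exactly what is needed to embed the interval $(p_k^2, (p_k + \alpha \log p_k)^2)$ inside $(p_k^2, p_{k+1}^2)$. The only small point of care is that one should make sure every quantity being squared or compared is positive, which is automatic since $p_k \geq 2$ and $\alpha \log p_k > 0$.
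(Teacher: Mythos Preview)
Your proposal is correct and follows essentially the same route as the paper: both arguments invoke Corollary~\ref{cor1} with $n=p_k$ and then show, via the factorisation $p_{k+1}^2-p_k^2=(p_{k+1}-p_k)(p_{k+1}+p_k)>2p_k(p_{k+1}-p_k)$, that the assumed gap forces $p_{k+1}^2>(p_k+\alpha\log p_k)^2$. The only cosmetic difference is the order of presentation.
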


\begin{proof}
First, it follows that
\[
p_{k+1} - p_k > \frac{2 \alpha p_k \log p_k + \alpha^2 \log^2 p_k}{p_k + p_{k+1}}.
\]
It is straightforward to rearrange this so that
\[
p_{k+1}^2 > (p_k + \alpha \log p_k)^2
\]
and, with reference to Corollary~\ref{cor1}, this completes the proof.
\end{proof}

\appendix
\section{Second bound for $\Sigma_1$}
\label{app:A1}
Since $N(T)\leq \frac{T}{2\pi}\log T$ (see~\cite[Corollary~1]{TrudgianII}), from~\eqref{eq:A2} one has
\[
|\Sigma_1|
\leq \frac{2N(T)}{\sqrt{x-h}} \int_{x-h}^{x+h} K(x-u;h)\d u
=    \frac{2h^2}{\sqrt{x-h}} N(T)
\leq \frac{h^2T}{\pi\sqrt{x-h}}\log T,
\]
which is the way this sum is estimated in~\cite{Dudek}. We improve the result by proving the existence of
a cancellation for the sum $\sum_{|\gamma|\leq T} u^{i\gamma}$. The structure of the counting function
$N(T)$ alone, that is the fact that $N(T) = \frac{T}{2\pi}\log T + O(\log T)$, is not sufficient to
ensure a cancellation in $\sum_{|\gamma|\leq T}u^{i\gamma}$ \emph{for every} $u$. To see this, one can
consider a set of points generated in this way: in the neighborhood of every integer $n$ there is a cloud
of $\intpart{\frac{1}{\pi}\log n}$ points which are placed very close to $n$. Their counting function
satisfies the same formula as $N(T)$, size of the remainder included. For this set, however, one has
$\sum_{|\gamma|\leq T}u^{i\gamma} \gg T\log T$ when $u=e^{2\pi}$, and similarly for every $u=e^{2k\pi}$
when $k\in\N$ is small with respect to $T$.

Thus, we can furnish a cancellation essentially in two ways: either we assume some hypothesis about the
distribution of the imaginary parts of the zeroes of $\zeta(s)$ (for example the Pair Correlation
Conjecture, as done in~\cite{Heath-BrownGoldston} and in~\cite{LanguascoPerelliZaccagnini}, or the
stronger Gonek conjecture~\cite{Gonek}), or we try to prove a cancellation in some mean sense. The second
possibility appears promising since in our computation the estimated object appears naturally in an
integral and produces a result not depending on a further unproved hypothesis.\\
In this way we can prove Theorem~\ref{th:A1} with $c=0.6102$.

\subsection*{Cancellation in mean}
We let
\[
S_\alpha(T) := \sum_{|\gamma|\leq T}e^{i\alpha \gamma},
\]
keep the notations
\begin{align*}
T &= \frac{\beta}{c}\frac{\sqrt{x}}{\log x},
\hspace{-2.5cm}&
h &= c\sqrt{x}\log x,
\intertext{and introduce}
a &:= \log(x-h),
\hspace{-2.5cm}&
b &:= \log(x+h),\\
A &:= \frac{b-a}{2} = \frac{1}{2}\log\Big(\frac{x+h}{x-h}\Big),
\hspace{-2.5cm}&
B &:= \frac{a+b}{2} = \frac{1}{2}\log(x^2-h^2).
\end{align*}
Notice that $A \sim h/x \approx T^{-1}$ and $B\sim \log x$ as $x$ diverges to infinity.
\begin{proposition}~\label{prop:A1}
Assume RH. Suppose $\beta \geq 1$, $c\leq 1$ and $x\geq 2$. Then
\[
\int_{a}^b |S_\alpha(T)|^2 \d \alpha
\leq \frac{1}{\pi^2} F(AT) T\log^2\Big(\frac{T}{2\pi}\Big) + H(A,T)
\]
with
\[
F(y) := \frac{1}{y}\int_0^y\int_{-y}^y |\sinc(u-u')| \d u\d u'
\]
where $\sinc(x):= \frac{\sin x}{x}$, and
\[
H(A,T):= \frac{4}{\pi}(2 + AT)AT(R(T)+1)\log\Big(\frac{T}{2\pi}\Big)
         + 8A\Big(1 + AT + \frac{1}{3} (AT)^2\Big) (R(T) + 1)^2.
\]
\end{proposition}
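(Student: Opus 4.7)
The plan is to expand $|S_\alpha(T)|^2=\sum_{|\gamma|,|\gamma'|\leq T} e^{i\alpha(\gamma-\gamma')}$ and integrate termwise. Using the identity
$$\int_a^b e^{i\alpha\delta}\d\alpha = 2A\, e^{iB\delta}\sinc(A\delta)\qquad(\delta\in\R,\ \sinc(0):=1),$$
one arrives at the exact formula
$$\int_a^b |S_\alpha(T)|^2\d\alpha = 2A\sum_{|\gamma|,|\gamma'|\leq T} e^{iB(\gamma-\gamma')}\sinc(A(\gamma-\gamma')).$$
Taking absolute values and exploiting the symmetry $\gamma\mapsto-\gamma$ of the ordinate set (the four sign choices produce $|\sinc(A(\gamma-\gamma'))|$ and $|\sinc(A(\gamma+\gamma'))|$ each twice) reduces the problem to
$$\int_a^b|S_\alpha(T)|^2\d\alpha \leq 4A\,\Sigma,\qquad \Sigma := \sum_{0<\gamma,\gamma'\leq T}\bigl[|\sinc(A(\gamma-\gamma'))| + |\sinc(A(\gamma+\gamma'))|\bigr].$$

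I would then write $\Sigma$ as a Riemann--Stieltjes integral on $[0,T]^2$ against $\d N(u)\d N(u')$ and decompose $\d N = \d W + \d\Rp$ in each variable. The smooth main term $I_{WW}$ has integrand $W'(u)W'(u')\leq \frac{1}{4\pi^2}\log^2(T/(2\pi))$ on $[0,T]^2$; after this crude bound, the rescaling $v=Au$, $v'=Au'$ together with the reflection $v'\mapsto-v'$ merges the two sinc-integrals into one:
\begin{align*}
\int_0^{AT}\!\!\int_0^{AT}\!\!\bigl[|\sinc(v-v')|+|\sinc(v+v')|\bigr]\d v'\d v = \int_0^{AT}\!\!\int_{-AT}^{AT}\!\!|\sinc(v-v')|\d v'\d v = AT\cdot F(AT).
\end{align*}
This yields $I_{WW}\leq \frac{T\log^2(T/(2\pi))}{4\pi^2A}F(AT)$, and multiplication by $4A$ reproduces exactly the main term $\frac{1}{\pi^2}F(AT)T\log^2(T/(2\pi))$ of the proposition.

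The three remaining pieces $I_{W\Rp}$, $I_{\Rp W}$, $I_{\Rp\Rp}$ I would estimate by integration by parts in the $\Rp$-variable(s), using the pointwise bound $|\Rp(u)|\leq R(u)+1$ on $[0,T]$ (the extra $+1$ absorbs the discontinuities of $\Rp$ at $u=0$ and at the first zero $\gamma_1$). Each such integration by parts transfers a derivative onto the product $|\sinc(A(u-u'))|\log(u/(2\pi))$ (or onto $|\sinc|$ alone in the $\Rp\Rp$ case), producing one- and two-dimensional integrals of $|\sinc|$ and $A|\sinc'|$ that, after the same rescaling, are controlled by polynomial expressions in $AT$ of the shape $(2+AT)AT$ for the mixed terms and $1+AT+\tfrac{1}{3}(AT)^2$ for the pure remainder term. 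Collecting all three contributions gives precisely the two pieces of $H(A,T)$.

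The main obstacle is the non-smoothness of $|\sinc|$: its derivative has sign jumps at every zero of $\sin$, so the integration by parts must be performed in the Riemann--Stieltjes sense, treating $|\sinc|$ as a BV function, and the boundary contributions at $u,u'\in\{0,T\}$ and at $u=\gamma_1$ must be tracked carefully, which is where the explicit constants $\tfrac{4}{\pi}$, $8$ and $\tfrac{1}{3}$ appearing in $H(A,T)$ ultimately come from.
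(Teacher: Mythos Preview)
Your proposal is correct and follows essentially the same route as the paper's proof: expand, integrate to obtain $\sinc$, reduce by symmetry, write as a double Stieltjes integral, split $\d N=\d W+\d\Rp$, extract the main term from the $\d W\,\d W$ piece by bounding $W'W'$ and rescaling, and handle the three remainder pieces by integration by parts. One minor clarification: the paper integrates from $2\pi$ (not $0$), so the ``$+1$'' in $R(T)+1$ arises from the exact boundary value $\Rp(2\pi)=1$ rather than from absorbing any discontinuity on $[0,T]$, and the explicit constants $4/\pi$, $8$, $1/3$ come from the sup-norm bounds $\|\sinc\|_\infty\leq 1$, $\|\sinc'\|_\infty\leq 1/2$, $\|\sinc''\|_\infty\leq 1/3$ (obtained from $2\sinc(x)=\int_{-1}^1 e^{ixy}\d y$).
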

\begin{remark*}
Once the orders of $h$ and $T$ as functions of $x$ are considered, the trivial bound and the new bound
are respectively
\[
\int_{a}^b |S_\alpha(T)|^2 \d \alpha
\leq \frac{2\beta}{\pi^2}\,T\log^2T
\quad
v.s.
\quad
\int_{a}^b |S_\alpha(T)|^2 \d \alpha
\leq \frac{1+o(1)}{\pi^2}F(\beta) T\log^2T,
\]
and it is easy to see that the second one improves on the first one for every $\beta>0$ as $T\to\infty$.
\end{remark*}
\begin{proof}
First, we have the series of equations:
\begin{align*}
\int_{a}^b |S_\alpha(T)|^2 \d \alpha
&= \Ree \int_{a}^b |S_\alpha(T)|^2 \d \alpha
 = \Ree\sum_{|\gamma|,|\gamma'|\leq T} \int_{a}^b e^{i\alpha(\gamma-\gamma')} \d \alpha                             \\
&= \Ree\sum_{|\gamma|,|\gamma'|\leq T} \frac{e^{ib(\gamma-\gamma')}-e^{ia(\gamma-\gamma')}}{i(\gamma-\gamma')}
 = 2\Ree\sum_{|\gamma|,|\gamma'|\leq T} e^{iB(\gamma-\gamma')}\frac{\sin\big(A(\gamma-\gamma')\big)}{\gamma-\gamma'}\\
& = 4\sum_{\substack{0<\gamma\leq T\\ -T\leq\gamma'\leq T}}
    \frac{\cos\big(B(\gamma-\gamma')\big)\sin\big(A(\gamma-\gamma')\big)}{\gamma-\gamma'}
\leq 4A\sum_{\substack{0<\gamma\leq T\\ -T\leq\gamma'\leq T}}
    |\sinc\big(A(\gamma-\gamma')\big)|.
\end{align*}
We will use below the following bounds for $\sinc(x)$:
\[
\|\sinc\|_\infty \leq 1,
\qquad
\|\sinc'\|_\infty \leq 1/2,
\qquad
\|\sinc''\|_\infty \leq 1/3.
\]
These are an immediate consequence of the representation $2\sinc(x) = \int_{-1}^1 e^{ixy}d y$.
%%%%%%%%%%%%%%%%
%The first few derivatives of $\sinc(x) := \frac{\sin x}{x}$ support the following conjecture:
%\[
%\|\sinc^{(n)}\|_\infty \leq \frac{1}{n+1}
%\]
%with equality when $n$ is even.
%%
%A quick proof comes from the representation $2\sinc(x) = \int_{-1}^1 e^{ixy}d y$, so that
%\[
%2\sinc^{(n)}(x) = \int_{-1}^1 (iy)^n e^{ixy}d y
%\qquad \implies \qquad
%2\|\sinc^{(n)}\|_\infty \leq 2\int_{0}^1 y^n d y = \frac{2}{n+1}.
%\]
%The same formula proves the equality when $n$ is even, because in that case $2\sinc^{(n)}(0) =
%2\int_{-1}^1 (iy)^n d y = \frac{2(-1)^{n/2}}{n+1}$.
%%%%%%%%%%%%%%%%
%
We thus write the double sum on zeros as a Stieltjes integral. Recalling that the imaginary part of the first
zero exceeds $2\pi$ we get
\begin{align*}
\int_{a}^b |S_\alpha(T)|^2 \d \alpha
\leq& 4A\int_{\substack{0<\gamma\leq T\\ - T\leq\gamma'\leq T}}
    |\sinc\big(A(\gamma-\gamma')\big)| \d N(\gamma)\d N(\gamma')\\
=& 4A\int_{\substack{2\pi<\gamma\leq T\\ 2\pi<\gamma'\leq T}}
    \Big(|\sinc\big(A(\gamma-\gamma')\big)| + |\sinc\big(A(\gamma+\gamma')\big)|\Big)\d N(\gamma)\d N(\gamma').
\end{align*}
To ease matters, we employ the notation
\[
f(t_1,t_2) := |\sinc(A(t_1 - t_2))| + |\sinc(A(t_1 + t_2))|
\]
which allows us to write that
\begin{align*}
\int_{a}^b |S_\alpha(T)|^2 \d \alpha
=& 4A\int_{\substack{2\pi<\gamma\leq T\\ 2\pi<\gamma'\leq T}}f(\gamma,\gamma')\d W(\gamma)\d W(\gamma')
   + 4A\int_{\substack{2\pi<\gamma\leq T\\ 2\pi<\gamma'\leq T}}f(\gamma,\gamma')\d W(\gamma)\d \Rp(\gamma')\\
 & + 4A\int_{\substack{2\pi<\gamma\leq T\\ 2\pi<\gamma'\leq T}}f(\gamma,\gamma')\d \Rp(\gamma)\d W(\gamma')
   + 4A\int_{\substack{2\pi<\gamma\leq T\\ 2\pi<\gamma'\leq T}}f(\gamma,\gamma')\d \Rp(\gamma)\d \Rp(\gamma').
\end{align*}
We write the sum of the above four integrals as $I +II + III + IV$ where the order is kept. It thus
remains to estimate separately the contribution of each integral. The first one produces the main term,
since
\begin{align*}
I
&\leq \frac{4A}{(2\pi)^2}
    \Big[\int_{\substack{2\pi<\gamma\leq T\\ 2\pi<\gamma'\leq T}}
         f(\gamma,\gamma') \d\gamma\d\gamma'
    \Big] \log^2\Big(\frac{T}{2\pi}\Big)\\
&\leq \frac{1}{\pi^2 A}
    \Big[\int_{\substack{0<u\leq AT\\ -AT\leq u'\leq AT}}
         |\sinc(u-u')| \d u\d u'
    \Big] \log^2\Big(\frac{T}{2\pi}\Big)
= \frac{1}{\pi^2} F(AT) T\log^2\Big(\frac{T}{2\pi}\Big).
\end{align*}
In estimating the integral $II$, an application of integration by parts gives (note that
$\partial_{\gamma'}|\sinc(A(\gamma\pm\gamma'))|$ has only jump singularities, so the formula still holds)
\begin{align*}
II
&= \frac{4A}{2\pi}\int_{2\pi}^T \Big[\int_{2\pi}^T f(\gamma,\gamma')\log\Big(\frac{\gamma}{2\pi}\Big)\d \gamma\Big]\d \Rp(\gamma')\\
&= \frac{2A}{\pi}\Big[\int_{2\pi}^T f(\gamma,\gamma')\log\Big(\frac{\gamma}{2\pi}\Big)\d \gamma\Big]\Rp(\gamma')\Big|_{2\pi}^T
  -\frac{2A}{\pi}\int_{2\pi}^T \Big[\int_{2\pi}^T \partial_{\gamma'}f(\gamma,\gamma')\log\Big(\frac{\gamma}{2\pi}\Big)\d \gamma\Big]\Rp(\gamma')\d\gamma'.
\intertext{We can estimate it as (recall that $\Rp(2\pi)=1$)}
&\leq \frac{2A}{\pi}\int_{2\pi}^T 2\|\sinc\|_\infty\log\Big(\frac{\gamma}{2\pi}\Big)\d \gamma (R(T) + 1)
  +\frac{2A^2}{\pi}\int_{2\pi}^T \int_{2\pi}^T 2\|\sinc'\|_\infty\log\Big(\frac{\gamma}{2\pi}\Big)|\Rp(\gamma')|\d \gamma\d\gamma'\\
&\leq \frac{4}{\pi}AT(R(T)+1)\log\Big(\frac{T}{2\pi}\Big)
  +\frac{2}{\pi}(AT)^2R(T)\log\Big(\frac{T}{2\pi}\Big)\\
&\leq \frac{2}{\pi}(2 + AT)AT(R(T)+1)\log\Big(\frac{T}{2\pi}\Big).
\end{align*}
The contribution of $III$ equals that of $II$, for we note the symmetry of the integral under the
transposition $\gamma\leftrightarrow \gamma'$. And so, lastly we have
\begin{align*}
IV
=& 4A\int_{2\pi}^T \Big[\int_{2\pi}^T f(\gamma,\gamma')\d \Rp(\gamma)\Big]\d \Rp(\gamma')\\
=& 4A \int_{2\pi}^T\Big[f(\gamma,\gamma')\Rp(\gamma)\Big|_{2\pi}^T\Big]\d \Rp(\gamma')
  -4A\int_{2\pi}^T \Big[\int_{2\pi}^T \partial_{\gamma}\Big(f(\gamma,\gamma')\Big)\Rp(\gamma)\d \gamma\Big]\d \Rp(\gamma')\\
=& 4A \int_{2\pi}^T f(T,\gamma')\Rp(T)\d \Rp(\gamma')
  -4A \int_{2\pi}^T f(2 \pi,\gamma') \d \Rp(\gamma')                                     \\
 &-4A\int_{2\pi}^T \Big[\int_{2\pi}^T \partial_{\gamma}\Big(f(\gamma,\gamma')\Big)\Rp(\gamma)\d \gamma\Big]\d \Rp(\gamma'),
\intertext{where a second integration by parts gives}
IV
=& 4Af(T,\gamma') \Rp(T)\Rp(\gamma')\Big|_{2\pi}^T -4A \int_{2\pi}^T\partial_{\gamma'}(f(T,\gamma')) \Rp(T)\Rp(\gamma')\d\gamma'\\
 &-4A f(2\pi,\gamma')\Rp(\gamma')\Big|_{2\pi}^T +4A \int_{2\pi}^T\partial_{\gamma'}(f(2 \pi, \gamma')) \Rp(\gamma')\d\gamma'\\
 &-4A\Big[\int_{2\pi}^T \partial_{\gamma}\Big(f(\gamma,\gamma')\Big)\Rp(\gamma)\d \gamma\Big]\Rp(\gamma')\Big|_{2\pi}^T
  +4A\int_{2\pi}^T \Big[\int_{2\pi}^T \partial_{\gamma'}\partial_{\gamma}\Big(f(\gamma,\gamma')\Big)\Rp(\gamma)\d \gamma\Big]\Rp(\gamma')\d \gamma'.
\intertext{Thus, one may establish the bound}
IV
\leq& \big[8A R^2(T) + 8A R(T)\big]
  +4A^2 \int_{2\pi}^T 2\|\sinc'\|_\infty R(T)|\Rp(\gamma')|\d\gamma'\\
 &+\big[8A R(T) + 8A\big]
  +4A^2 \int_{2\pi}^T 2\|\sinc'\|_\infty |\Rp(\gamma')|\d\gamma'\\
 &+4A^2\Big[\int_{2\pi}^T 2\|\sinc'\|_\infty |\Rp(\gamma)|\d \gamma\Big](R(T)+1)
  +4A^3\int_{2\pi}^T \int_{2\pi}^T 2\|\sinc''\|_\infty |\Rp(\gamma)\Rp(\gamma')|\d \gamma\d \gamma'.
  \intertext{Estimating the integrals, one has that}
IV
\leq& 8A R^2(T) + 8A R(T)
  +4A^2 T R^2(T)
  +8A R(T) + 8A
  +4A^2 T R(T)\\
 &+4A^2 T R(T)(R(T)+1)
  +\frac{8}{3}A^3 T^2 R^2(T)\\
%
%=& 8A (R(T)^2 + 2R(T) +1)
%+4A^2T(2R^2(T) + 2R(T))
%+\frac{8}{3} A^3T^2 R^2(T)\\
%
%=& 8A (R(T) + 1)^2
%+8A^2T(R(T)| + 1)R(T)
%+\frac{8}{3} A^3T^2 R^2(T)\\
%
=& 8A \Big((R(T) + 1)^2 + AT(R(T) + 1)R(T) + \frac{1}{3} (AT)^2 R^2(T)\Big)\\
\leq& 8A\Big(1 + AT + \frac{1}{3} (AT)^2\Big) (R(T) + 1)^2.
\end{align*}
Therefore, the contribution of $II$, $III$ and $IV$ is bounded by
\[
2\cdot\frac{2}{\pi}(2 + AT)AT(R(T)+1)\log\Big(\frac{T}{2\pi}\Big)
+8A\Big(1 + AT + \frac{1}{3} (AT)^2\Big) (R(T) + 1)^2,
\]
which is $H(A,T)$.
\end{proof}

\subsection*{Estimation of $\Sigma_1$}
We now use the above result on cancellation to estimate the first sum over the zeroes. The Cauchy-Schwarz
inequality yields the bound
\begin{align}
|\Sigma_1|
&%\leq \int_{x-h}^{x+h} K(x-u;h)\Big|\sum_{|\gamma|\leq T}u^{i\gamma}\Big|\frac{\d u}{\sqrt{u}}
 \leq \int_{x-h}^{x+h} K(x-u;h)\Big|\sum_{|\gamma|\leq T}e^{i\gamma\log u}\Big|\frac{\d u}{\sqrt{u}}                    \notag\\
&= \int_{\log(x-h)}^{\log(x+h)} e^{\alpha/2}K(x-e^{\alpha};h)\Big|\sum_{|\gamma|\leq T}e^{i\alpha\gamma}\Big|\d \alpha  \notag\\
&\leq \Big[\int_{\log(x-h)}^{\log(x+h)} e^{\alpha}K^2(x-e^{\alpha};h)\d \alpha\Big]^{1/2}
     \Big[\int_{\log(x-h)}^{\log(x+h)} |S_\alpha(T)|^2\d \alpha\Big]^{1/2},                                             \notag\\
\intertext{and so we have that}
| \Sigma_1|
&\leq \Big[\int_{-h}^{h} K^2(u;h)\d u\Big]^{1/2}
     \Big[\int_{\log(x-h)}^{\log(x+h)} |S_\alpha(T)|^2\d \alpha\Big]^{1/2}                      \notag\\
&= \sqrt{\frac{2}{3}}\,h^{3/2}
     \Big[\int_{\log(x-h)}^{\log(x+h)} |S_\alpha(T)|^2\d \alpha\Big]^{1/2}.                     \notag
\intertext{We can now apply Proposition~\ref{prop:A1} to get the estimate}
|\Sigma_1|
&\leq \sqrt{\frac{2}{3}}\,h^{3/2}
     \Big(\frac{1}{\pi^2}F(AT)T\log^2\Big(\frac{T}{2\pi}\Big) + H(A,T)\Big)^{1/2}               \notag\\
&= \frac{1}{\pi}
    \Big(\frac{2}{3}F(AT) + \frac{2\pi^2}{3}\frac{H(A,T)}{T\log^2(T/2\pi)}\Big)^{1/2}
    h\sqrt{hT}\log\Big(\frac{T}{2\pi}\Big)                                                      \notag\\
&= \frac{1}{\pi}
    \Big(\frac{2\beta}{3}F(AT) + \frac{2\beta\pi^2}{3}\frac{H(A,T)}{T\log^2(T/2\pi)}\Big)^{1/2}
    h\sqrt{x}\log\Big(\frac{T}{2\pi}\Big).                                                      \label{eq:A4}
\end{align}

We now proceed to prove the analog of Theorem~\ref{th:A1}.

\subsection*{First claim}
We want to prove that there is a prime in $(x-c\sqrt{x}\log x, x+c\sqrt{x}\log x)$ with $c=0.6102$.
From~\eqref{eq:A1} and the bound~\eqref{eq:A4} for $\Sigma_1$ we get
\begin{multline}\label{eq:A5}
h^{-1}\sum_{|p-x|<h} \log p
\geq 1
 - \Big(\frac{2\beta}{3}F(AT) + \frac{2\beta\pi^2}{3}\frac{H(A,T)}{T\log^2(T/2\pi)}\Big)^{1/2}\frac{\sqrt{x}}{\pi h}\log\Big(\frac{T}{2\pi}\Big)
 - 4(x+h)^{3/2} \frac{\log T}{\pi h^2T}\\
 - 0.002\frac{\sqrt{x}}{h} - 3\frac{\sqrt[3]{x}}{h} - \frac{2}{\sqrt{x}}
 - \frac{3}{xh^2}
\end{multline}
when $x\geq 121$.
When $x$ diverges to infinity, Inequality~\eqref{eq:A5} becomes
\begin{equation}\label{eq:A6}
h^{-1}\sum_{|p-x|<h} \log p
\geq
 1 - \frac{\alpha}{c} + \Big(\frac{2\alpha}{c} + o(1)\Big)\frac{\log\log x}{\log x},
\end{equation}
where $\alpha := \frac{1}{\pi}\big(\sqrt{\frac{\beta}{6}F(\beta)} + \frac{2}{\beta}\big)$, uniformly for
$\beta$ and $c$ in any compact set of $(0,+\infty)$.
The minimum of $\alpha$ is attained for $\beta = \beta_{\min} := 2.4934\ldots$, and is $\alpha_{\min} :=
0.61019\ldots$. Thus, setting $\beta=\beta_{\min}$ and $c=\alpha_{\min}$, the right hand side
of~\eqref{eq:A6} is positive when $x$ is large enough.
%
%G ABSSinc(x)=if(x!=0,abs(sin(x)/x),0);
%G F(b)=intnum(v=0,b,intnum(u=-b,b,ABSSinc(u-v)))/b;
%G FF(b)=(sqrt(b*F(b)/6)+2/b)/Pi;
%G ploth(b=1,4,FF(b),,100)
%G \\%7 = [1.0000000000000000000, 4.000000000000000000, 0.6101995806559580293, 0.8107073999188270097]
%G \\ the minimum:
%G ploth(b=1,4,FF(b),,100)
%G \\%7 = [1.0000000000000000000, 4.000000000000000000, 0.6101995806559580293, 0.8107073999188270097]
%G ploth(b=2,3,FF(b),,100)
%G \\%8 = [2.000000000000000000, 3.000000000000000000, 0.6101968534808974720, 0.6212991540509831268]
%G ploth(b=2.45,2.55,FF(b),,100)
%G \\%9 = [2.450000000000000178, 2.549999999999999822, 0.6101967191291555182, 0.6103060010120414658]
%G ploth(b=2.48,2.5,FF(b),,100)
%G \\%10 = [2.479999999999999982, 2.500000000000000000, 0.6101967154886323020, 0.6102035057563122145]
%G ploth(b=2.49,2.5,FF(b),,100)
%G \\%11 = [2.490000000000000213, 2.500000000000000000, 0.6101967154886323020, 0.6101983235581985854]
%G ploth(b=2.492,2.494,FF(b),,100)
%G %28 = [2.491999999999999993, 2.494000000000000217, 0.6101965491467539193, 0.6101966719405935713]
%G ploth(b=2.4934,2.4936,FF(b),,100)
%G %29 = [2.493399999999999839, 2.493599999999999817, 0.6101965491467539193, 0.6101965520844976343]
%
Inserting $\beta=2.493$ and $c=0.6102$ directly into~\eqref{eq:A5} one obtains an inequality where the
function appearing on the right hand side is positive whenever $x\geq 16000$, so that the claim is
proved in this range.
%
%G \\ Test the theorem
%G
%G ABSSinc(x)=if(x!=0,abs(sin(x)/x),1);
%G F(b)=intnum(v=0,b,intnum(u=-b,b,ABSSinc(u-v)))/b;
%
%G R(T) = 0.112*log(T) + 0.278*log(log(T)) + 2.51 + 0.2/T + 7/8;
%
%G H(A,T)=
%G {
%G 4/Pi*(2+A*T)*A*T*(R(T)+1)*log(T/(2*Pi))
%G + 8*A*(1+A*T+(A*T)^2/3)*(R(T)+1)^2
%G };
%
%G DX(b,c,x)=
%G {
%G my(h = c*sqrt(x)*log(x), T = b/c*sqrt(x)/log(x), A = 1/2*log((x+h)/(x-h)));
%G 1
%G - sqrt(2/3*b*F(A*T) + (2*b*Pi^2)/3*H(A,T)/(T*log(T/(2*Pi))^2)) * sqrt(x)/(Pi*h)*log(T/(2*Pi))
%G - 4*(x+h)^(3/2)*log(T)/(Pi*h^2*T)
%G - 0.002*sqrt(x)/h
%G - 3*x^(1/3)/h
%G - 2/sqrt(x)
%G - 3/(x*h^2)
%G };
%G
%G my(b=2.4934,c=0.6102);ploth(x=100,10^6,DX(b,c,x),,100)
%G \\%33 = [100.00000000000000000, 1000000.0000000000000, -3.112845391186497945, 0.2526417290560152851]
%G my(b=2.4934,c=0.6102);ploth(x=10^4,2*10^4,DX(b,c,x),,100)
%G \\%35 = [10000.000000000000000, 20000.000000000000000, -0.06859430617297923583, 0.03532773127678270897]
%G
%G my(b=2.4934,c=0.6102);solve(x=10^4,2*10^4,DX(b,c,x))
%G %42 = 15389.85972143089990067581221
%G
%
Lastly, for $x\in[2,16000]$ it is sufficient to check that $p_{k+1}-p_k \leq 2c\sqrt{p_k}\log p_k$
(which gives the claim for $x\in[p_k,p_{k+1}]$) when $k\leq 2000$.
%
%G \\ this proves the claim when $x\geq 2$:
%G \\ primepi(16000) = 1862;  <--- 2000 suffices
%G my(n=0);forprime(p1=2,16000,n++;p2=nextprime(p1+1);if(p2-p1<2*0.6*sqrt(p1)*log(p1),,print(n," ",p2," no")))
%

\subsection*{Second claim}
We want to prove that there are at least $\sqrt{x}$ primes in $(x-(c+1)\sqrt{x}\log x,x+(c+1)\sqrt{x}\log
x)$ with $c=0.6102$. Since
\[
h^{-1}\sum_{|p-x|<h} \log p \leq \frac{1+O(\frac{\log x}{\sqrt{x}})}{c\sqrt{x}}\sum_{|p-x|<h} 1,
\]
from~\eqref{eq:A6} we also get that
\[
\sum_{|p-x|<h} 1
\geq
 \Big(c - \alpha + (2\alpha + o(1))\frac{\log\log x}{\log x}\Big)\sqrt{x}.
\]
In particular, setting $\alpha=\alpha_{\min}$ and $c=\alpha_{\min}+1$ this shows that
\[
\sum_{|p-x|<(\alpha_{\min}+1)\sqrt{x}\log x} 1
\geq \sqrt{x},
\]
when $x$ is large enough. Once again, choosing these values directly in~\eqref{eq:A6} one gets an
explicit inequality which can be proved for $x\geq 1500$, proving the statement in this range.
%
%G my(b=2.4934,c=0.6102+1);ploth(x=10^2,10^4,h=c*sqrt(x)*log(x);DX(b,c,x)*c*log(x)/log(x+h),,100)
%G \\%44 =  [100.00000000000000000, 10000.000000000000000, -0.4448218077155521866, 5.304386097262178801]
%G my(b=2.4934,c=0.6102+1);solve(x=10^3,10^4,h=c*sqrt(x)*log(x);DX(b,c,x)*c*log(x)/log(x+h))
%G \\%45 = 1463.102525262685869859292014
%
The claim for $x\in[2,1600]$ may be checked directly by noticing that $p_{n+\intpartup{\sqrt{p_n}}} - p_{n
\vphantom{\intpartup{\sqrt{p_n}}}} \leq 2(c+1)\sqrt{p_n}\log p_n$ (giving the claim for $x\in[p_{n
\vphantom{\intpartup{\sqrt{p_n}}}}, p_{n+\intpartup{\sqrt{p_n}}}]$) for $n=1,\ldots,251$.
%
%G \\ this proves the claim when $x\geq 2$:
%G \\ primepi(1600) = 251;  <--- 251 suffices
%G my(n=0);forprime(p1=2,1600,n++;p2=prime(n+ceil(sqrt(p1)));if(p2-p1>= 2*(0.6+1)*sqrt(p1)*log(p1),print(n," ",p2," no")))
%

\begin{acknowledgements}
All computations have been done using PARI/GP~\cite{PARI2}.\\
We are very grateful to the referee for her/his careful reading and suggestions which improved the
presentation.
\end{acknowledgements}

%\bibliographystyle{amsplain}
%\bibliography{f:/books}

\end{document}